\newcommand{\CM}{Cohen-Macaulay}
\newcommand{\wrt}{with respect to}
\newcommand{\n}{\mathfrak{n} }
\newcommand{\m}{\mathfrak{m} }
\newcommand{\rt}{\rightarrow}
\newcommand{\ov}{\overline}
\newcommand{\low}{\ell\ell}
\newcommand{\wh}{\widehat }
\newcommand{\ed}{\operatorname{embdim}}
\newcommand{\height}{\operatorname{height}}
\newcommand{\Ass}{\operatorname{Ass}}
\newcommand{\type}{\operatorname{type}}
\newcommand{\soc}{\operatorname{soc}}
\newcommand{\projdim}{\operatorname{projdim}}
\newcommand{\injdim}{\operatorname{injdim}}
\newcommand{\ord}{\operatorname{ord}}
\newcommand{\Hom}{\operatorname{Hom}}
\newcommand{\Tor}{\operatorname{Tor}}
\theoremstyle{plain}
\newtheorem{theorem}{Theorem}[section]
\newtheorem{proposition}[theorem]{Proposition}
\theoremstyle{definition}
\newtheorem{remark}[theorem]{Remark}
\theoremstyle{remark}
\begin{document}

\title[Lowey length]{On the Lowey length of modules of finite projective dimension-II.}
\author{Tony~J.~Puthenpurakal}
\date{\today}
\address{Department of Mathematics, IIT Bombay, Powai, Mumbai 400 076}

\email{tputhen@math.iitb.ac.in}
\subjclass{Primary 13D02; Secondary 13D40 }
\keywords{index, Loewy length}

 \begin{abstract}
Let $(A,\m)$ be a  Gorenstein local ring
of dimension $d \geq 1$.
Suppose there exists
 be a non-zero $A$ module
 $M$ of finite
length and finite projective dimension such that $\low(M)$, the Lowey length of $M$, is
  equal to $\lambda(M)$, the length
of $M$.
Then we show that  necessarily $A$ is at worst a hypersurface singularity. We also
characterize  Gorenstein
local rings having a non-zero module $M$ of finite length and finite projective dimension   with $\low(M) = \lambda(M)-1$.

\end{abstract}
 \maketitle
\section{introduction}
\noindent Dear Reader, \\
While reading this paper
it will be a good idea to keep a copy of part 1, \cite{P}, of this paper with you. The notation used in this paper are
same as that of \cite{P}.

Let $(A,\m)$ be a Gorenstein local ring of postive dimension and let
$M$ be a non-zero module of finite length and finite projective dimension. In part 1 of this paper we showed that the Lowey
length $\low(M) \geq \ord(A)$. This generalizes a result of Avramov, Buchweitz, Iyengar and Miller, \cite[1.1]{ABIM}.
 Thus the singularity of $A$ imposes a lower bound on the Lowey length of modules of finite projective dimension.

 In this paper we investigate upper bounds of Lowey lengths of finite length modules of finite projective dimension.
 An obvious
 upper bound  is
 $\low(M) \leq  \lambda(M)$.
 Note that if $A$ is regular then $k = A/\m$.
 has finite projective dimension and
 $\low(k) = \lambda(k) = 1$. Our first result is
 \begin{theorem}\label{main-1}
 Let $(A,\m)$ be a \CM \ local ring of  dimension $d \geq 1$. Suppose there exists an
$A$-module $M$ of non-zero finite length module of finite projective dimension such that
 $\low(M) = \lambda(M)$.  Then $A$ is at worst a hypersurface singularity, i.e., the completion $\widehat{A} =  Q/(f)$ where
 $(Q,\n)$ is a complete regular local ring and $f \in \n$.
 \end{theorem}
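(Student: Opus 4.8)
Set $n=\lambda(M)=\low(M)$. The filtration $M\supseteq \m M\supseteq\cdots\supseteq\m^{n-1}M\supseteq\m^nM=0$ has exactly $n$ nonzero steps, so each quotient $\m^iM/\m^{i+1}M$ has length one, i.e. is $\cong k$, for $0\le i<n$. In particular $M$ is cyclic, $M\cong A/I$ with $I=\ann_A M\supseteq\m^n$ and $I\not\supseteq\m^{n-1}$. If $n=1$ then $M=k$ has finite projective dimension, so $A$ is regular and there is nothing to prove; assume $n\ge 2$. By Nakayama, $\m M$ is cyclic, say $\m M=A\bar x$, so that $\m=(x)+I$ with $x\notin\m^2$, and inductively $\m^iM=A\,\overline{x^i}$, $x^n\in I$, $x^{n-1}\notin I$.

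\textbf{Step 2: presentation over a regular ring.} Completing changes none of $\low(M),\lambda(M),\projdim M$, so assume $A=Q/\aF$ with $(Q,\n)$ complete regular local of dimension $e=\ed(A)$ and $\aF\subseteq\n^2$ (otherwise $A$ is regular). Lift $x$ to $\tilde x\in\n\setminus\n^2$, a member of a regular system of parameters of $Q$. Since $M=Q/\widetilde I$ (with $\widetilde I$ the preimage of $I$) is generated as a $Q$-module by $1,\tilde x,\dots,\tilde x^{\,n-1}$, a standard computation (killing the other parameters modulo $\widetilde I$ and reducing to a DVR) produces $z_2,\dots,z_e\in\widetilde I$ with $\{\tilde x,z_2,\dots,z_e\}$ a regular system of parameters of $Q$ and
\[
\widetilde I=(z_2,\dots,z_e,\tilde x^{\,n}),\qquad \aF\subseteq\widetilde I .
\]
Thus $\widetilde I$ is a complete-intersection ideal of $Q$ (a regular sequence of length $e=\dim Q$), and, because $n\ge 2$, the image of $\widetilde I$ in $\n/\n^2$ — its "linear part" — is exactly the $(e-1)$-dimensional subspace spanned by $z_2,\dots,z_e$.

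\textbf{Step 3: the homological input (the main obstacle).} The crux is to show that $\projdim_A M<\infty$ forces $\aF$ to be generated by a regular sequence, so that $A$ is a complete intersection with $\mu(\aF)=\height\aF=e-d$, and forces $I=\ann_A M$ to be generated by a regular sequence, hence a system of parameters, so that $\mu(I)=\height I=d$. The route I would take uses the change-of-rings spectral sequence $E^2_{p,q}=\Tor^A_p(M,\Tor^Q_q(A,k))\Rightarrow\Tor^Q_{p+q}(M,k)$: since $M=Q/\widetilde I$ is a $Q$-complete intersection, $\Tor^Q_{\bullet}(M,k)\cong\Lambda^{\bullet}(k^e)$, and $E^2$ lives in the rectangle $[0,\projdim_A M]\times[0,\projdim_Q A]$. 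Inspecting the extreme corner already gives that $A$ is Gorenstein and $\beta^A_{\dim A}(M)=1$; pushing the analysis down the spectral sequence — equivalently, using the Jacobi–Zariski (André–Quillen) sequence for $Q\to A\to M$ together with $D_2(M/Q;-)=0$ (complete-intersection property of $\widetilde I$), or the descent of the complete-intersection property along surjections of finite projective dimension — yields the regular-sequence statements and $\mu(I)=d$. I expect this to be the step requiring the most work.

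\textbf{Step 4: conclusion.} Write $\aF=(f_1,\dots,f_c)$ with $c=\mu(\aF)=\height\aF=e-d$ and all $f_i\in\n^2$, and $I=(\bar a_1,\dots,\bar a_d)$ with $\mu(I)=d$; then $\widetilde I=\aF+(\tilde a_1,\dots,\tilde a_d)=(f_1,\dots,f_c,\tilde a_1,\dots,\tilde a_d)$. As the $f_i$ lie in $\n^2$, the linear part of $\widetilde I$ is spanned by the images of $\tilde a_1,\dots,\tilde a_d$ in $\n/\n^2$, hence has dimension at most $d$. Comparing with Step 2 gives $e-1\le d$, i.e. $c=e-d\le 1$. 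If $c=0$ then $A=Q$ is regular. If $c=1$, then $A$ is Cohen–Macaulay of codimension one over $Q$, so $\projdim_Q A=1$; writing $0\to Q^a\to Q\to Q/\aF\to 0$ shows $\aF\cong Q^a$ is free of rank one, hence principal, $\widehat A=Q/(f)$. In all cases $A$ is at worst a hypersurface singularity.
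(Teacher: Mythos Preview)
Your Steps 1 and 2 are correct and the geometry they extract is exactly right: the hypothesis forces $M=A/I$ to be cyclic with $\m=(x)+I$, and over a Cohen presentation $Q$ one gets $\widetilde I=(z_2,\dots,z_e,\tilde x^{\,n})$, a complete-intersection ideal whose image in $\n/\n^2$ has dimension exactly $e-1$.

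The gap is Step 3, and you have correctly flagged it as ``the step requiring the most work'' --- but I think you underestimate how much. The corner of the change-of-rings spectral sequence does give $\beta^Q_{e-d}(A)\cdot\beta^A_d(M)=1$, hence $A$ Gorenstein and $\type(M)=1$; that part is fine. What it does \emph{not} give is $\mu(I)=d$. For Step~4 you need the linear part of $\widetilde I$ to be at most $d$-dimensional, i.e.\ $\mu(I)\le d$, and this is equivalent to $I$ being generated by a regular sequence. The only general route from ``$A/I$ is a complete intersection and $\projdim_A(A/I)<\infty$'' to ``$I$ is generated by a regular sequence'' is Avramov's theorem on descent of the complete-intersection property along homomorphisms of finite flat dimension --- a deep result built on Andr\'e--Quillen homology, not something one reads off a few more differentials. (Gorenstein alone is not enough: there exist Gorenstein ideals of finite projective dimension that are not complete intersections.) So as written, your argument either has a hole at Step~3 or quietly imports a theorem much harder than the one being proved.

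The paper's route avoids this entirely by a short induction on $d$. From your Step~1 one has $\m=(a)+I$; dualizing, $M^\vee$ satisfies the same hypothesis and is therefore also cyclic, and a cyclic module of finite injective dimension forces $A$ Gorenstein (Peskine--Szpiro). For $d=1$, the resolution $0\to A^{\beta_1}\to A\to A/I\to 0$ localized at a minimal prime gives $\beta_1=1$, so $I=(t)$ and $\m=(a,t)$, whence $\ed(A)\le 2$. For $d\ge 2$, the equality $\m=(a)+I$ forces $I\not\subseteq\m^2$, so one can pick an $A$-regular $x\in I\setminus\m^2$; Nagata's formula $P^{A/(x)}_M(z)=P^A_M(z)/(1+z)$ together with $(1+z)\mid P^A_M(z)$ shows $\projdim_{A/(x)}M<\infty$, and one finishes by induction. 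This is elementary throughout, whereas your linear-algebra endgame, though elegant, rests on a hard theorem at the decisive moment.
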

 Note that we do not require
 $A$ to be Gorenstein in Theorem \ref{main-1}.

We complement Theorem \ref{main-1} by proving
\begin{proposition}\label{Ex-1}
 Let $(A,\m)$ be an abstract hypersurface ring of positive dimension. Then there exists a
 finite length $A$- module $M$ of finite projective dimension with $\low(M) = \lambda(M)$.
\end{proposition}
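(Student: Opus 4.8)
The plan is to pass to the completion and then take $M$ to be a quotient of $A$ by part of a regular system of parameters. First, since $A/\m^{n} \cong \wh{A}/\m^{n}\wh{A}$ for every $n$, every $A$-module of finite length is canonically an $\wh{A}$-module of the same length and the same Loewy length, and $\projdim_{A}M = \projdim_{\wh{A}}M$ by faithful flatness (using $\wh{A}\otimes_{A}M \cong M$); conversely every finite length $\wh{A}$-module arises this way. Hence it suffices to construct the required module over $\wh{A}$, and I write $\wh{A} = Q/(f)$ with $(Q,\n)$ a complete regular local ring and $f \in \n$. If $f = 0$ then $A$ is regular and, for any regular system of parameters $x_{1},\dots,x_{d}$, the module $M = A/(x_{1},\dots,x_{d}) = k$ has $\low(M) = \lambda(M) = 1$ and finite projective dimension $d$; so I may assume $f \neq 0$, and then $\dim Q = d+1$.

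The crux is the claim: \emph{there is a regular system of parameters $x_{1},\dots,x_{d+1}$ of $Q$ with $f \notin (x_{1},\dots,x_{d})Q$.} Granting it, set
\[
M \;=\; A/(x_{1},\dots,x_{d})A \;=\; Q/(f,x_{1},\dots,x_{d}).
\]
Since $x_{1},\dots,x_{d}$ is part of a regular system of parameters, $V := Q/(x_{1},\dots,x_{d})$ is a discrete valuation ring; choose a uniformizer $\pi$. The image $\bar{f}$ of $f$ in $V$ is nonzero (because $f \notin (x_{1},\dots,x_{d})$) and lies in $\pi V$ (because $f \in \n$), so $\bar{f} = \pi^{e}\cdot(\text{unit})$ with $e \geq 1$ and $M \cong V/(\pi^{e})$; in particular $M$ is nonzero of finite length. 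Moreover $x_{1},\dots,x_{d}$ is a system of parameters of the hypersurface ring $A$, which is \CM, hence an $A$-regular sequence, so the Koszul complex on $x_{1},\dots,x_{d}$ is a finite free resolution of $M$ and $\projdim_{A}M = d < \infty$. Finally, the image of $\m$ in $M \cong V/(\pi^{e})$ is $\bar{\pi}M$, so $\m^{i}M = \bar{\pi}^{\,i}M = \pi^{i}V/(\pi^{e})$, which is nonzero exactly when $i < e$; hence $\low(M) = e$, while the strictly decreasing chain $M \supsetneq \bar{\pi}M \supsetneq \cdots \supsetneq \bar{\pi}^{\,e-1}M \supsetneq 0$ of $e$ submodules, each successive quotient isomorphic to $k$, gives $\lambda(M) = e$. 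Thus $\low(M) = \lambda(M)$, as wanted.

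It remains to prove the claim, which I expect to be the main obstacle. I would prove, by induction on $m \geq 2$, the more general statement: for every complete regular local ring $(R,\m_{R})$ of dimension $m$ and every $0 \neq g \in \m_{R}$ there is a regular system of parameters $z_{1},\dots,z_{m}$ of $R$ with $g \notin (z_{1},\dots,z_{m-1})R$. Since a regular local ring is a unique factorization domain, $g$ has only finitely many pairwise non-associate prime divisors; fixing members $u,v$ of a regular system of parameters of $R$, the elements $u - v^{n}$ for $n \geq 1$ are pairwise non-associate primes lying outside $\m_{R}^{2}$ (a short computation with leading forms), so some $z_{1} := u - v^{n}$ does not divide $g$, i.e.\ $g \notin (z_{1})$. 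If $m = 2$, complete $z_{1}$ to a regular system of parameters arbitrarily and stop. If $m > 2$, then $R/(z_{1})$ is a complete regular local ring of dimension $m-1 \geq 2$ in which the image $\bar{g}$ is nonzero, so the inductive hypothesis yields $\bar{z}_{2},\dots,\bar{z}_{m}$ with $\bar{g} \notin (\bar{z}_{2},\dots,\bar{z}_{m-1})$; lifting these to $z_{2},\dots,z_{m}\in R$ and prepending $z_{1}$ gives a regular system of parameters of $R$ with $g \notin (z_{1},\dots,z_{m-1})$ (otherwise $\bar{g}$ would lie in $(\bar{z}_{2},\dots,\bar{z}_{m-1})$). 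Applying this with $R = Q$, $m = d+1$, $g = f$ gives the claim.

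The delicate point is precisely the construction of $z_{1}$: over a finite residue field there need not be a ``generic'' regular system of parameters avoiding a given hypersurface, and it is the factorization argument — finitely many prime divisors of $g$ versus the infinitely many non-associate primes $u - v^{n}$ — that makes the claim work over an arbitrary residue field. This is also the reason for passing to the complete ring $Q$ at the outset, where unique factorization is available.
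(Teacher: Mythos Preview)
Your proof is correct and ends up at the same module the paper constructs---a quotient $A/(x_1,\dots,x_d)$ by a maximal regular sequence, which is then identified with $V/(\pi^e)$ for a discrete valuation ring $V$---but you take a longer road to reach it. The paper works directly in $A$: by prime avoidance one picks an $A$-regular sequence $x_1,\dots,x_d$ with each $x_i\in\m\setminus\m^2$ (and, implicitly, with images linearly independent in $\m/\m^2$); then $B=A/(x_1,\dots,x_d)$ is Artinian with $\ed(B)=\ed(A)-d\le 1$, hence---being Artinian and therefore complete---$B$ is itself a quotient of a DVR, from which $\low(B)=\lambda(B)$ is immediate. Your route instead passes to $Q$ and spends most of its effort on the inductive UFD claim that a regular system of parameters of $Q$ can be arranged with $f\notin(x_1,\dots,x_d)$; this is valid (and your trick with the primes $u-v^n$ neatly handles finite residue fields), but the embedding-dimension bookkeeping makes it unnecessary: once the $x_i$ are linearly independent modulo $\m^2$, the quotient automatically has embedding dimension $\le 1$, and no explicit avoidance of $f$ is required. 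What your argument buys is an explicit presentation $M\cong Q/(f,x_1,\dots,x_d)$ inside the ambient regular ring; what the paper's argument buys is brevity.
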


A natural question is what happens if $\lambda(M) - \low(M) = 1$?
Our next result implies that this still forces strong conditions on $A$.
Set $\ed(A)$ to be the embedding dimension of $A$, i.e, the number of minimal generators of $\m$.
We show
\begin{theorem}\label{main-2}
 Let $(A,\m)$ be a Gorenstein local ring of dimension $d \geq 1$
 and let $M$ be a non-zero finite length module of finite projective dimension. If $\lambda(M) - \low(M) = 1$ then
$\ed(A) - d \leq 2$ (in particular $A$ is a complete intersection of codimension $\leq 2$). Furthermore we have
\begin{enumerate}[\rm (1)]
  \item Assume $\ed(A) - d = 1$ (the hypersurface ring case).  If $\wh{A} = Q/(f)$ where $(Q,\n)$ is regular local then $f \in \n^2 \setminus \n^3$.
  \item Assume $\ed(A) - d = 2$. If $\wh{A} = Q/(f, g)$ where $(Q,\n)$ is regular local then $f,g \in \n^2 \setminus \n^3$.  Furthermore the images of $f,g$ in $\n^2/\n^3$ is linearly independent over $k = A/\m$.
  \item If $M$ is not cyclic then necessarily $A$ is regular local.
\end{enumerate}
\end{theorem}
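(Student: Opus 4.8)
\medskip
\noindent\emph{Proposed proof.} The strategy runs parallel to the proof of Theorem~\ref{main-1}: kill a regular sequence contained in $\ann M$ to collapse $M$ onto a small Artinian ring — Gorenstein, in the cyclic case — and then extract the conclusions from Hilbert functions in low degree. First the reductions. Passing to the completion changes none of $\lambda(M)$, $\low(M)$, the finiteness of $\projdim_A M$, $d$, $\ed A$, or the Gorenstein property, so we may write $A=Q/I$ with $(Q,\n)$ complete regular local, $\ed Q=\ed A=:e$, and $I\subseteq\n^2$ a Gorenstein ideal of height $c=e-d$; in items (1), (2) the stated presentations are forced to be minimal (the defining ideal is principal, resp.\ $2$-generated, and $A$ has codimension $1$, resp.\ $2$), so $f,g\in\n^2$ there. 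Since $\depth_A M=0$, Auslander--Buchsbaum gives $\projdim_A M=d$. From the Loewy filtration of $M$, the $n=\low(M)$ successive quotients have positive lengths summing to $\lambda(M)=n+1$, so exactly one has length $2$ and the rest length $1$; as the first of these lengths is $\mu(M)$, either $\mu(M)=2$ or $M$ is cyclic.

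The main device: the $\m$-primary ideal $\ann_A M$ contains a system of parameters $\mathbf y=y_1,\dots,y_d$, which is an $A$-regular sequence since $A$ is \CM; as $\mathbf y M=0$ and $\projdim_A M=d<\infty$, change of rings along $\mathbf y$ gives $\projdim_{A/(\mathbf y)}M=0$, so $M$ is free over $B:=A/(\mathbf y)$ of rank $\mu_A(M)$. If $M$ is not cyclic, then $M\cong B^2$, whence $2\lambda(B)=\lambda(M)=\low(M)+1=\low(B)+1\le\lambda(B)+1$; thus $\lambda(B)=1$, $B=k$, $\mathbf y$ generates $\m$, and $A$ is regular — this proves (3). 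If $M$ is cyclic, then $M\cong B$, so $J:=\ann_A M=(\mathbf y)$ is a parameter ideal and $B=A/J$ is Artinian Gorenstein with $\lambda(B)=\low(B)+1$ and $\operatorname{gr}_{\m_B}B=\operatorname{gr}_{\m}M$; the same length count applied to $B$ makes the Hilbert function of $\operatorname{gr}_{\m_B}B$ take one value $2$ and all others $1$, and since its degree-$1$ value is $\ed B$ — which cannot be $\ge 3$ (all values are $\le 2$) nor $1$ (then $\m_B$ is principal and all values are $1$) — we get $\ed B=2$; moreover $\low(B)\ge 3$, since $\low(B)=2$ would force $\m_B^2=0$, hence $\soc(B)=\m_B$ of dimension $2$, impossible for an Artinian Gorenstein ring. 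Then $\ed A-d=c\le\ed(A/(\mathbf y))=\ed B=2$, as killing the $d$ elements $\mathbf y$ drops the embedding dimension by at most $d$; with the regular case this gives $\ed A-d\le 2$, and a Gorenstein ideal of height $\le 2$ is a complete intersection.

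For (1) and (2), $A$ is not regular, so by (3) we are in the cyclic case: $B=A/J$, $J=(y_1,\dots,y_d)$ a parameter ideal, $\ed B=2$. Since $\ed B=\ed A-\dim_k\big((J+\m^2)/\m^2\big)$, the images of the $y_i$ span an $(e-2)$-dimensional subspace of $\m/\m^2$; after elementary operations we may take $y_1,\dots,y_{e-2}$ to be part of a minimal basis of $\m$ and the remaining generators of $J$ to lie in $\m^2$ (in case (1), $e-2=d-1$, so exactly one generator is redundant; in case (2), $e-2=d$, none). Lift $y_1,\dots,y_{e-2}$ to part $\tilde y_1,\dots,\tilde y_{e-2}$ of a regular system of parameters of $Q$ and set $Q':=Q/(\tilde y_1,\dots,\tilde y_{e-2})$, a $2$-dimensional regular local ring with $\operatorname{gr}_{\n'}(Q')\cong k[u,v]$. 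Then $B=Q'/(h_1,h_2)$, where $h_1$ is the image of $f$ and $h_2$ is the image of $g$ (case (2)) or of the redundant parameter $\tilde y_d\in\n^2$ (case (1)); as $I\subseteq\n^2$, both $h_1,h_2$ lie in $(\n')^2$. Now $\operatorname{gr}_{\m_B}B$ is a quotient of $k[u,v]$ whose degree-$2$ Hilbert value is $1$ (this is where $\low(B)\ge 3$ enters), so the kernel ideal has a $2$-dimensional degree-$2$ part; since $\ord h_1,\ord h_2\ge 2$, the degree-$2$ component of any $ah_1+bh_2$ ($a,b\in Q'$) is $a(0)$ times that of $h_1$ plus $b(0)$ times that of $h_2$, so this degree-$2$ part is exactly the span of the quadratic parts of $h_1$ and $h_2$. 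Hence the quadratic parts of $h_1,h_2$ are nonzero and linearly independent, so $\ord h_1=\ord h_2=2$. As orders cannot drop along $Q\twoheadrightarrow Q'$ while $f,g\in\n^2$, this yields $f\in\n^2\setminus\n^3$, and in case (2) also $g\in\n^2\setminus\n^3$; finally the map $\n^2/\n^3\to(\n')^2/(\n')^3$ carries the classes of $f$ and $g$ to the (independent) quadratic parts of $h_1,h_2$, so those classes are linearly independent over $k$.

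The genuinely delicate point is this last reduction: choosing $J$ and isolating exactly $e-2$ of its generators that are linearly independent modulo $\m^2$ with the others in $\m^2$, lifting them to a partial regular system of parameters of $Q$ compatibly with $f,g\in\n^2$, and checking that for a $2$-generated ideal of order $\ge 2$ the degree-$2$ part of its initial ideal is precisely the span of the two quadratic leading forms. The remaining ingredients — change of rings along a regular sequence, $\ed(A/(\mathbf y))\ge\ed A-d$, the behaviour of $\operatorname{gr}$ and $\ord$ under a surjection, and the fact that height-$\le 2$ Gorenstein ideals are complete intersections — are standard.
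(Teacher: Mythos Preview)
Your argument is correct, and it follows a genuinely different route from the paper's. The paper proceeds by induction on $d$: in the cyclic case it peels off one regular element $x\in I\setminus\m^2$ at a time and invokes Nagata's Poincar\'e--series identity $P_M^{A/(x)}(z)=P_M^A(z)/(1+z)$ together with Proposition~\ref{divide} to keep $\projdim$ finite; in the non-cyclic case it uses Matlis duality and a socle analysis to exhibit $k$ as a direct summand of $M$. For items (1) and (2) the paper then reduces separately to $\dim A=1$ (resp.\ to a two-dimensional regular $\overline Q$) and argues case by case, including an auxiliary discussion of $\type(M)$.

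You instead kill a full system of parameters $\mathbf y\subseteq\ann M$ at once and use the first change-of-rings theorem $\projdim_A M=\projdim_{A/(\mathbf y)}M+d$ (valid since each $y_i$ is regular on the successive quotients and annihilates $M$) to make $M$ free over the Artinian Gorenstein ring $B=A/(\mathbf y)$. This yields (3) in one line and, in the cyclic case, identifies $M$ with $B$, after which the Hilbert-function count and the observation $\low(B)\ge 3$ give $\ed B=2$ and the degree-$2$ initial-form computation in the two-dimensional regular ring $Q'$ handles (1) and (2) simultaneously. Your approach is more uniform and avoids both Matlis duality and the Poincar\'e-series machinery; the paper's approach, on the other hand, extracts finer structural information along the way (for instance, the explicit shapes $I=\m^2$, $I=(u,v)$, or $I=(x^2,xy,y^n)$ in the regular two-dimensional case), which goes beyond what the stated theorem requires.
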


We complement Theorem \ref{main-2} by giving examples \ref{examples-thm-2} which shows that all the cases above occur.

To prove our results we use some elementary techniques which arise in the study of associated graded rings of $\m$-primary ideals
in Cohen-Macaulay rings. It is suprising to us that a single module $M$ imposes such restrictions on the ring $A$.

\section{preliminaries}
In this section we discuss a few preliminaries that we need.

\s \label{complete}
\emph{reduction to the case when $A$ is complete:} \\
Let $\widehat{A}$ be the completion of $A$ \wrt \  $\m$. If $M$ is a finitely generated $A$-module then set $\widehat{M} = M \otimes_A \widehat{A}$
the completion of $M$. Then the following results are well-known:
\begin{enumerate}
 \item $\lambda_A(M) = \lambda_{\widehat{A}}(\widehat{M}).$
 \item
 $\low_A(M) = \low_{\widehat{A}}(\widehat{M})$
 \item
 $\dim M = \dim \widehat{M}$.
 \item
 $\ed(A) = \ed(\widehat{A})$
 \item
 $\ord(A) = \ord(\widehat{A})$.
\item
$\projdim_A M = \projdim_{\widehat{A}} \widehat{M}$.
 \end{enumerate}
We also note that if $M$ has finite length then it has a natural structure as an $\widehat{A}$-module and $\widehat{M} \cong M$
as $A$-modules.

We need the following  well known result:
\begin{proposition} \label{dual}
Let $(A,\m)$ be a complete local ring and let $E$ be the injective hull of $k = A/\m$. Let $M$ be a finite length
 $A$-module. Set $M^\vee = \Hom_A(M, E)$. Then
 \begin{enumerate}[\rm(1)]
  \item $M \cong (M^\vee)^\vee$.
  \item
  $\lambda(M^\vee) = \lambda(M)$.
  \item
  $\low(M^\vee) = \low(M)$.
  \item If $\projdim M < \infty$ then $\injdim M^\vee < \infty$. Furthermore if $A$ is Gorenstein then $\projdim M < \infty$.
 \end{enumerate}
\end{proposition}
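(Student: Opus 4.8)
The plan is to derive all four statements from the standard formalism of Matlis duality. Since $E$ is injective, the contravariant functor $(-)^\vee = \Hom_A(-,E)$ is exact, and for every $A$-module $M$ there is a natural evaluation homomorphism $\eta_M \colon M \to (M^\vee)^\vee$ given by $\eta_M(m)(\phi) = \phi(m)$. Two elementary facts about the injective hull $E$ of $k$ drive everything: first, $\soc E \cong k$ (if $0 \neq x \in \soc E$ then $Ax$ is a simple submodule of the essential extension $k \subseteq E$, forcing $Ax \subseteq k$); second, consequently $k^\vee = \Hom_A(k,E) \cong \soc E \cong k$. Granting these, part (1) follows by d\'evissage on $\lambda(M)$: $\eta_k$ is an isomorphism by the first fact, and if $0 \to M' \to M \to M'' \to 0$ is exact with $\eta_{M'}$ and $\eta_{M''}$ isomorphisms, then applying the exact functor $(-)^{\vee\vee}$ and the five lemma to the resulting commutative ladder shows $\eta_M$ is an isomorphism; now induct along a composition series of $M$.

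For (2), apply the exact functor $(-)^\vee$ to a composition series $0 = M_0 \subset M_1 \subset \cdots \subset M_n = M$ with $M_i/M_{i-1} \cong k$; this yields a filtration of $M^\vee$ whose successive quotients are $(M_i/M_{i-1})^\vee \cong k$, so $\lambda(M^\vee) = n = \lambda(M)$. For (3), I would first observe that $\ann_A M = \ann_A M^\vee$: the inclusion $\ann_A M \subseteq \ann_A M^\vee$ is immediate since $(-)^\vee$ is a functor (multiplication by $a\in\ann_A M$ on $M$ is zero, hence so is the induced map on $M^\vee$), and applying this to $M^\vee$ together with (1) gives the reverse inclusion. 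Hence for every $n$ we have $\m^n M = 0 \iff \m^n \subseteq \ann_A M = \ann_A M^\vee \iff \m^n M^\vee = 0$, and taking the least such $n$ gives $\low(M^\vee) = \low(M)$.

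For (4), choose a finite free resolution $0 \to F_s \to \cdots \to F_1 \to F_0 \to M \to 0$, which exists because $\projdim_A M < \infty$ and $A$ is Noetherian local; each $F_i$ is finitely generated free, so $F_i^\vee \cong E^{b_i}$ is injective. Applying $(-)^\vee$ produces an injective resolution $0 \to M^\vee \to F_0^\vee \to \cdots \to F_s^\vee \to 0$, so $\injdim_A M^\vee \le s < \infty$. For the final clause I would invoke the standard homological characterization of Gorenstein local rings (Bass, and its refinements): over a Gorenstein local ring a finitely generated module has finite injective dimension if and only if it has finite projective dimension. Applied to $M^\vee$ this yields $\projdim_A M^\vee < \infty$, and running the first part of (4) backwards on $M^\vee$ together with the isomorphism $M \cong (M^\vee)^\vee$ shows that over a Gorenstein ring all the finiteness conditions on $M$ and on $M^\vee$ are equivalent.

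None of this is deep: the only genuine input beyond formal manipulation is the structure of the injective hull ($\soc E \cong k$ and essentiality of $k \subseteq E$) underlying (1)--(3), and the cited characterization of Gorenstein rings in (4). The one place to be slightly careful is the d\'evissage argument for (1), where one must check that the evaluation maps assemble into an honest commutative ladder of short exact sequences before invoking the five lemma.
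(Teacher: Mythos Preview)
Your proposal is correct and aligns with the paper's approach. The paper defers (1), (2), and (4) to standard references (Bruns--Herzog for (1)--(2), ``well-known'' for (4)), while your d\'evissage and resolution-dualizing arguments are exactly what those references would supply; for (3) the paper computes $(af)(m)=f(am)=0$ directly to obtain $\low(M^\vee)\le\low(M)$ and then invokes biduality for the reverse inequality, which is precisely your annihilator argument $\ann_A M \subseteq \ann_A M^\vee$ specialized to powers of $\m$.
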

\begin{proof}
 The first two assertions are well-known, cf  \cite[3.2.12]{BH}. \\
 (3) Suppose $\m^c M = 0$. Let $f \in M^\vee$, $m \in M$ and let $a \in  \m^c$.
 Then
 $$(af)(m) = f(am) = f(0) = 0$$
 It follows that $\low(M) \leq \low(M^\vee)$.

 By same argument we get $\low(M^\vee) \leq \low( M^{\vee \vee})$. But $M^{\vee \vee} \cong M$. The result follows.
 
 (4) This is well-known.
\end{proof}

\s Let $G(A) = \bigoplus_{n \geq 0} \m^n/\m^{n+1}$ be the associated graded ring of $A$ (\wrt \ to $\m$) and let
$G(M) = \bigoplus_{n \geq 0} \m^n M/ \m^{n+1}M$ be the associated graded module of $M$. Clearly if $M$ is finitely generated
as an $A$-module then $G(M)$ is a finitely generated $G(A)$-module.

\s Let $\beta_i(M) = \lambda(\Tor^A_i(M,k))$ be the $i^{th}$ betti number of $M$. Let $P^A_M(z) = \sum_{i\geq 0} \beta_i(M) z^n$
be the \emph{Poincare series} of $M$.

We will need the well-known result:
\begin{proposition}\label{divide}
 Let $A$ be a \CM \ local ring of dimension $d \geq 1$. Let $N$ be  a finite length module and also assume projective dimension of
 $N$ is finite. Then $(1+z)$ divides $P_N^A(z)$.
\end{proposition}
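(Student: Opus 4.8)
The plan is to rephrase the assertion $(1+z) \mid P^A_N(z)$ as the vanishing of an Euler characteristic and then read that vanishing off a finite free resolution of $N$ after localizing away from $\m$. Since $\projdim_A N < \infty$ we have $\beta_i(N) = 0$ for $i \gg 0$, so $P^A_N(z)$ is a polynomial in $\Z[z]$; hence $(1+z)$ divides $P^A_N(z)$ \ff \ $P^A_N(-1) = 0$, i.e. \ff
\[
\sum_{i \geq 0} (-1)^i \beta_i(N) = 0 .
\]
So the proposition reduces to showing that the alternating sum of the Betti numbers of $N$ vanishes.

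To prove this I would fix a minimal free resolution $F_\bullet \to N$; it is bounded because $\projdim_A N < \infty$, and $\operatorname{rank}_A F_i = \beta_i(N)$. Since $\dim A = d \geq 1$, the maximal ideal $\m$ is not a minimal prime, so we may choose a minimal prime $\mathfrak{p} \neq \m$ of $A$; because $N$ has finite length, $N_\mathfrak{p} = 0$. Localization being exact, $(F_\bullet)_\mathfrak{p}$ is a bounded exact complex of finitely generated free modules over the Artinian local ring $A_\mathfrak{p}$, with $(F_i)_\mathfrak{p}$ free of rank $\beta_i(N)$. Since length over $A_\mathfrak{p}$ is additive on short exact sequences, the Euler characteristic $\sum_i (-1)^i \lambda_{A_\mathfrak{p}}\!\big((F_i)_\mathfrak{p}\big)$ of this complex equals the alternating sum of the lengths of its homology modules, which is $0$. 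As $\lambda_{A_\mathfrak{p}}\!\big((F_i)_\mathfrak{p}\big) = \beta_i(N)\,\lambda_{A_\mathfrak{p}}(A_\mathfrak{p})$ and $\lambda_{A_\mathfrak{p}}(A_\mathfrak{p}) \neq 0$, we may divide through to get $\sum_i (-1)^i \beta_i(N) = 0$, as wanted.

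I do not anticipate a genuine obstacle here; the argument is formal once one notices that a finite length module localizes to zero at any prime other than $\m$ (so the \CM \ hypothesis is not really needed, only $d \geq 1$). The one point worth stating carefully is the classical fact that the Euler characteristic of a bounded complex of finite length modules equals that of its homology, which one proves by breaking the complex into short exact sequences of cycles, boundaries and homology. If one wished to avoid localization, an alternative is to reduce to the complete case by \ref{complete}, write $A = R/I$ with $(R,\n)$ regular local, invoke the Auslander--Buchsbaum equality $\projdim_A N = \depth A - \depth N = d$, and count ranks over the total quotient ring of $A$ (which is flat over $A$ and annihilates $N$); but the localization argument above is the shortest and is the one I would write down.
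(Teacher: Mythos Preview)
Your proof is correct and follows exactly the approach the paper sketches: localize a minimal free resolution of $N$ at a minimal prime of $A$ and read off the vanishing of the alternating sum of Betti numbers. You have simply fleshed out the one-line sketch with the standard Euler-characteristic bookkeeping, and your side remark that the \CM\ hypothesis is not actually used (only $d\geq 1$) is also correct.
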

\begin{proof}
\textit{Sketch:}  Simply  localize a minimal free resolution of $N$ at a minimal prime of $A$.
\end{proof}

The following result is needed later on.

\begin{proposition}\label{lean}
 Let $(A,\m)$ be a Noetherian local ring and let $E$ be a finitely generated $A$-module.
 Assume $\m^i E = < p>$ and $\mu(\m^{i+1} E) \leq 1$. Then there exists $x \in \m$ with
 $\m^{i+1} E = <x p>$.
\end{proposition}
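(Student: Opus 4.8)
The plan is to reduce the statement to a single application of Nakayama's lemma. First I would rewrite the two relevant powers of $\m$ in terms of $p$: since $\m^i E = <p>$ we have $\m^{i+1}E = \m(\m^i E) = \m p$ and likewise $\m^{i+2}E = \m^2(\m^i E) = \m^2 p$. Hence the hypothesis $\mu(\m^{i+1}E) \leq 1$ says precisely that the $k$-vector space
$$\m^{i+1}E / \m^{i+2}E \;\cong\; \m p / \m^2 p$$
has dimension at most $1$. Observe also that $\m^{i+1}E$ is a finitely generated $A$-module, since $E$ is finitely generated and $A$ is Noetherian, so Nakayama's lemma is applicable to it.

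If $\dim_k \m p/\m^2 p = 0$, then $\m(\m^{i+1}E) = \m^{i+1}E$, so $\m^{i+1}E = 0$ by Nakayama, and we may take $x = 0 \in \m$, giving $\m^{i+1}E = <xp> = 0$. Now suppose instead $\dim_k \m p/\m^2 p = 1$. Fix generators $y_1, \ldots, y_n$ of $\m$. Then $y_1 p, \ldots, y_n p$ generate $\m p = \m^{i+1}E$, so their images span the one-dimensional $k$-space $\m p/\m^2 p$; therefore at least one of them, say the image of $y_j p$, is nonzero, hence is a $k$-basis. Consequently $\m p = (y_j p) + \m(\m p)$, and Nakayama's lemma yields $\m^{i+1}E = \m p = <y_j p>$. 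Taking $x = y_j \in \m$ finishes the argument.

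I do not anticipate a serious obstacle here: the result is essentially a packaging of Nakayama's lemma. The only points that need a moment's care are handling the degenerate case $\m^{i+1}E = 0$ separately, and being precise that it is the quotient $\m^{i+1}E/\m^{i+2}E = \m p/\m^2 p$ whose $k$-dimension equals $\mu(\m^{i+1}E)$, so that passing to generators of $\m$ and invoking Nakayama actually produces a generator of the required form $xp$.
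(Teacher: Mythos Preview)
Your proof is correct and follows essentially the same approach as the paper: both pick generators $y_1,\ldots,y_s$ of $\m$, observe that the elements $y_jp$ generate $\m^{i+1}E$, and then use Nakayama (the paper phrases this as ``a generating set can be shortened to a minimal basis'') to conclude that some single $y_jp$ already generates. Your write-up simply spells out the Nakayama step more explicitly.
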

\begin{proof}
 Let $\m = (y_1,\ldots, y_s)$.
 If $\m^{i+1} E = 0$ then set $x = 0$. We now consider the case when $\m^{i+1} E \neq 0$.
 We note that $\m^{i+1} E$ is generated by $y_1p, \ldots, y_sp$. As a generating set of $\m^{i+1} E$
 can be shortened to a minimal basis we get that $\m^{i+1} E = <y_j p>$  for some $j$. Set $x = y_j$.
 \end{proof}

\section{Proof of Theorem \ref{main-1}}
In this section we give
\begin{proof}[Proof of Theorem \ref{main-1}]
We may assume that $A$ is complete.
 Let $c = \low(M) = \lambda(M)$.

 \textit{Step-1:}   $\mu(\m^iM) = 1$ for $i = 0,\ldots,c-1$. In particular $M$ is cyclic.\\
 Consider $G(M) = \bigoplus_{i = 0}^{c-1}\m^i M/\m^{i+1} M$. Observe that $G(M)$ has finite length as an $A$-module and
 $\lambda(G(M)) = \lambda(M) = c$. The result follows.

 Let $M = A/I$ for some ideal $I$.

 \textit{Step-2:} $A$ is a Gorenstein ring and $I$ is a Gorenstein ideal.\\
 Let $M^\vee$ be the Matlis dual of $M$. Note
 $$\low(M^v) = \low(M) = \lambda(M) = \lambda(M^\vee).$$
 So by Step 1 we get that $M^\vee$ is also cyclic. Note injective dimension of $M^\vee$ is finite. By Peskine and Szpiro \cite[II, 5.5]{PS}   we get that $A$ is a Gorenstein ring.

 Set $B = M = A/I$. Let the maximal ideal of $B$ be $\n$. By Step 1 the number of generators of $\n$ is utmost one.
 So $\m = (a, I)$ for some $a \in \m$. We may also assume that $A$ is not regular.

 By induction on $d$ we prove that $A$ is a hypersurface ring.

 We first consider the case when $d = 1$. Note a free resolution of $A/I$ is
 \[
  0 \rt A \xrightarrow{\phi} A \rt A/I \rt 0.
 \]
Notice $\phi$ is multiplication by some element $t \in A$. So $I = (t)$. Thus $\m = (a, t)$.
As $A$ is not regular we get that $A$ is a hypersurface ring.

Now assume $d \geq 2$ and the result has been proved for rings of dimension $d-1$.

We now make the following:\\
Claim 1: $I \nsubseteq \m^2$.\\
Claim 2: $I \nsubseteq P$ for any $P \in \Ass(A)$

 We finish the proof by assuming Claims1,2 which we prove later.

 By prime avoidance there exists $x \in I$ such that $x$ is $A$-regular and $x \notin \m^2$. Note that $M$ is also a
 $C = A/(x)$-module. By a result of Nagata (cf., \cite[3.3.5]{LLAV})),
 \[
  P^C_M(z) = \frac{P^A_M(z)}{(1+z)}
 \]
By \ref{divide} we get that $P_M^C(z)$ is a polynomial. In particular $\projdim_C M$ is finite.
By our induction hypothesis it follows that $C$ is a hypersurface ring. As $C = A/(x)$ and $x\notin \m^2$ is $A$-regular it follows
that $A$ is also a hypersurface ring.

Proof of Claim 1: Suppose if possible $I \subseteq \m^2$. Then by Nakayama Lemma we get $\m = (a)$. In particular $d \leq 1$
which is a contradiction.

Proof of Claim 2: If $I \subseteq P$ for some $P \in \Ass(A)$ then $\dim A/I \geq \dim A/P \geq  2$, a contradiction.
\end{proof}

We now give
\begin{proof}[Proof of Proposition \ref{Ex-1}]
 By prime avoidance choose an $A$-regular sequence \\  $x_1,\ldots, x_d$ with $x_i \in \m \setminus \m^2$ for all $i = 1,\ldots, d$.
 Set $B = A/(x_1,\ldots,x_d)$. Then $B$ has finite projective dimension over $A$ and $B$ has finite length.
 Also note that $B$ is a hypersurface. As $B$ is complete we get that $B = Q/(f)$ for some DVR $Q$. It is now easy to verify
 that $\low(B) = \lambda(B)$. Set $M = B$.
\end{proof}

\begin{remark}
 In the proof of the above Proposition we do not make any claim on $\low(M)$. If residue field of $A$ is infinite then we can choose
 $x_1,\ldots, x_d$ to be a superficial sequence. Then
 $$\low(M) = \lambda(M) = e(A) = \ord(A). $$
 The assertion $\lambda(M) = e(A)$, the multiplicity of $A$, is well known among researches in Blow up algebra's, for instance see \cite[Corollary 10]{Pu1}.
\end{remark}

\section{Proof of Theorem \ref{main-2}}
In this section we give:
\begin{proof}[Proof of Theorem \ref{main-2}]
We  assume $A$ is complete.
 Let $c = \low(M)$. By hypotheses $\lambda(M) = c + 1$. Note  $M^\vee$ also satisfies the hypotheses of the theorem since $\lambda(M^\vee) - \low(M^\vee) = 1$, see \ref{dual}.

 We consider the associated graded  module
 $G(M) = \bigoplus_{i = 0}^{c-1}\m^i M/\m^{i+1}M$. We consider two cases:\\
 \textit{Case 1:} $\mu(M) = 2$. Then $\mu(\m^i M) = 1 $ for $i = 1,\ldots c-1$.\\
 \textit{Case 2:} $\mu(M) = 1$. Then $B = M = A/I$ for some ideal $I$ in $A$. We note that
 $G(B) = G(M)$ is a standard $k$-algebra.
 So if $\mu (\m B) = 1$ then $\mu(\m^i B) \leq 1$ for all $i \geq 1$.
 This forces $\lambda(M) = \lambda(G(M)) \leq c$ which is a
 contradiction. So we have $\mu(\m M) = 2$ and $M$ is cyclic.

 We first consider Case 2 when $A$ is not regular.\\
 We prove by induction on dimension $d$ that $\ed(A) - d \leq 2$ and $\type(A/I) = 1$. We first note that there exists $a,b \in \m$
 such that $\m = (a, b, I)$.

 We first consider the case when $d = 1$.
 The projective resolution of $A/I$ is
 \[
  0 \rt A^m \rt A \rt A/I \rt 0.
 \]
After localizing at a minimal prime we get $m = 1$.
We also get that $I$ is principal. Thus $\ed(A) - 1 \leq 2$. As $A$  is Gorenstein we in-fact get that $A$ is a complete intersection of codimension $\leq 2$.  We also get $A/I$ is a complete intersection. So $\type(A/I) = 1$.

We now consider the case $d \geq 2$.\\
Claim-1:  $I \nsubseteq \m^2$.\\
Otherwise by Nakayama Lemma we get that $\m = (a,b)$, This is not possible if $d \geq 3$. If $d = 2$ this implies
that $A$ is regular, again a contradiction.

Claim-2: $I \nsubseteq P$ for every $P \in \Ass(A)$. \\
This is clear since $\height P = 0$ and $\height I = d \geq 2$.

  By prime avoidance there exists $x \in I$ such that $x$ is $A$-regular and $x \notin \m^2$. Note that $M = A/I$ is also a
 $C = A/(x)$-module. By a result of Nagata (cf., \cite[3.3.5]{LLAV})),
 \[
  P^C_M(z) = \frac{P^A_M(z)}{(1+z)}
 \]
By \ref{divide} we get that $P_M^C(z)$ is a polynomial. In particular $\projdim_C M$ is finite.
By our induction hypothesis it follows that $\ed(C) - (d-1) \leq 2$  and $\type(A/I) = 1$. As $C = A/(x)$ and
$x\notin \m^2$ is $A$-regular it follows
that $\ed(A) - d \leq 2$. As $A$ is Gorenstein we get that $A$ is a complete intersection of codimension $\leq 2$.

We now consider Case 1:
Suppose if possible $A$ is NOT regular.
We assert that $\type(M) = 1$ is not possible. Otherwise $M^\vee$ is a cyclic module with $\lambda(M^\vee) = \low(M^\vee) + 1$
 and $\type(M^\vee) = 2$. This contradicts Case 2.

So $\type(M) \geq 2$. As $\lambda(M^\vee) = \low(M^\vee) + 1$ we get $\type(M) = 2$.
Clearly $\m^{c-1}M \subseteq \soc(M)$.  Suppose if possible there exists some $i$
 with $1\leq i \leq c-2$ there
exists $p \in \m^iM \setminus \m^{i+1}M$ such that $p \in \soc(M)$. Then $\m^i M = <p>$. So  by \ref{lean}   there exists $x \in \m$ with
$\m^{i+1}M  =  < xp >  =  0$, a contradiction.
 As $\type(M) = 1$ is not possible it follows that there exists atleast one minimal generator of $M$ which is in the socle of $M$.

 If both the generators of $M$ are in the socle of $M$ then $M$ is a vector space. So $A$ is regular.
 Otherwise $M = < u, v>$ where $u \in \soc(M)$ and $v \notin \soc(M)$. 
 Set $U = Au$ and $V = Av$. Then $U + V = M$. If $\theta \in U\cap V$ then $\theta = \alpha u = \beta v$.
 If $\alpha \notin \m$ then we get $u = \alpha^{-1}\beta v$, a contradiction. So $\alpha \in \m$. This implies $\theta = 0$.
 So $M = U \oplus V$. It follows that $\projdim U < \infty$. As $U \cong k$ we get that $A$ is regular. Thus our assumption that $A$ is not regular is wrong. So $A$ is regular.

 If $A$ is regular and $\type(M) \geq 2$ then the same argument as before forces $\type(M) = 2$ and  $k$ to be a direct summand of $M$.
 If $\type(M) = 1$ then $M^\vee$ satisfies Case (2) with $A$ regular.

Next we analyze case (2) when $A$ is regular.

We first assert that $\dim A = 1$ is not possible. Suppose if possible $(A,\m)$ is one dimensional regular local ring and $M$ is a cyclic $A$-module with $\lambda(M) = \low(M) + 1$. Note $\m = (\pi)$ and $M = A/(\pi^s)$ for some $s \geq 1$. Then note $\lambda(M) = \low(M) = s$, a contradiction. Thus $\dim A = 1$ is not possible.

Now assume $\dim A \geq 3$ and $M = A/I$ satisfies our hypothesis. Then as $\lambda(\m/ \m^2 + I) = 2$ it follows that there exists $x \in I \setminus \m^2$. Thus we may reduce dimension of $A$ and so we can reduce to the case when $\dim A = 2$.

We now consider the case when $\dim A = 2$ and $M = A/I$ satisfies our hypotheses. If there exists $x \in I \setminus \m^2$ then $M$ is a module over the DVR $A/(x)$ which is a contradiction.
So $I \subseteq \m^2$. We have an exact sequence
\[
0 \rt \frac{I + \m^3}{\m^3} \rt \frac{\m^2}{\m^3} \rt \frac{\m^2}{I + \m^3} = \m^2M/\m^3M \rt 0
\]
 Now $\lambda(\m^2 M) \leq 1$. If $\m^2 M = 0$ then $I = \m^2$. Note $\type M = 2$.

 If $\m^2 M \neq 0$ then note that $\lambda((I + \m^3)/\m^3) = 2$.

 As $M^\vee$ also satisfies our hypotheses it follows that $\type M \leq 2$.
 If $\type M = 1$ then $M = A/I$ is a Gorenstein ring. It is well known that in this case $I$ is generated by two elements. It follows that $I = (u, v)$ where the images of $u, y$ in $\m^2/\m^3$ is linearly independent.

Consider the case when $\type(M) = 2$ and $\m^2 M \neq 0$.
Clearly $\m^{c-1}M \subseteq \soc(M)$.  Suppose if possible there exists some $i$
 with $2\leq i \leq c-2$ there
exists $p \in \m^iM \setminus \m^{i+1}M$ such that $p \in \soc(M)$. Then $\m^i M = <p>$. So by \ref{lean} there exists $x \in \m$ with
$\m^{i+1}M  =  < xp >  =  0$, a contradiction.
 So  there exists atleast one minimal generator of $\m M$ which is in the socle of $M$. As $M = A/I$ and $I \subseteq \m^2$ we may assume that there is a minimal generator  $x$ of $\m$ whose image in $A/I$ is in the socle of $A/I$. Let $x,y$ be a minimal generating set of $\m$. Then $x^2, xy \in I$. As easy computation shows that in-fact $I = (x^2, xy, y^n)$ for some $n \geq 3$.

 Further analysis of Case 2 when $A$ is not regular.

 Notice we may assume $\dim A = 1$ as when $\dim A \geq 2$ there exists $x \in I \setminus \m^2$.

  We first consider the case when $A$ is a hypersurface. Say $A = Q/(f)$ with $(Q,\n)$ regular local of dimension two and  $f \in \n^2$. By our proof $M = A/(x)$ for some non-zero divisor $x$ in $A$. Thus $M = Q/(f, x)$. As $\type M = 1$ it follows by our result when $Q$ is regular of dimension two and $M$ is cyclic with $\type(M) = 1$ that $f, x \in \n^2\setminus \n^3$.

  Next we consider the case when $A$ is a complete intersection of codimension two. Say $A = Q/(f, g)$ with $(Q,\n)$ regular local of dimension three and  $f,g \in \n^2$. By our proof $M = A/(x)$ for some non-zero divisor $x$ of $A$.

  Claim: $x \in \n \setminus \n^2$.\\
  Suppose if possible $x \in \n^2$.  Set $J = (f, g, x)$. We have an exact sequence
\[
0 \rt \frac{J + \n^3}{\n^3} \rt \frac{\n^2}{\n^3} \rt \frac{\n^2}{J + \n^3} = \n^2M/\n^3M \rt 0
\]
This yields $\lambda(\n^2 M/ \n^3 M) \geq 3$, a contradiction.

Set $\ov{Q} = Q/(x)$ a regular local ring of dimension two. Then $M = \ov{Q}/(\ov{f}, \ov{g})$. From our analysis  when $\ov{Q}$ is regular  of dimension two and $M$ is cyclic with $\type(M) = 1$
that $\ov{f},\ov{g} \in \n^2\setminus \n^3$.   Furthermore the images of $\ov{f}, \ov{g}$ in $\ov{\n}^2/\ov{\n}^3$ is linearly independent over $k = A/\m$. The result follows.
 \end{proof}

\s\label{examples-thm-2} \textbf{Examples:} In this subsection we give examples which shows that there exists local rings $A$ with $\ed(A) - \dim A \leq 2$ and an $A$-module $M$ with $\lambda(M) = \low(M) + 1$.

(i) If $A$ is regular we consider the following four examples:\\
(a) $A = k[[X, Y]]$. Set $M = A/(X^2, Y^2)$. Then $M$ is cyclic and $\lambda(M) = 4$ while $\low(M) = 3$.\\
(b) $A = k[[X, Y]]$. Set $U = A/(X^2, Y^2, XY)$ and $V = U^\vee$. Then both $U,V$ satisfy our constraints. Note $V$ is not cyclic.\\
(c) $M = k \oplus k$ does the job (here $M$ is NOT cyclic). \\
(d) $A = k[[X,Y]]$. Fix $n \geq 3$. Set
\[
M = \frac{A}{(X^2, XY, Y^n)} = k \oplus (kX \oplus kY) \oplus kY^2\oplus \cdots \oplus kY^{n-1}.
\]
Then note $\lambda(M) = n + 1$ while $\low(M) = n$.

(ii) $A = k[[X, Y]]/(Y^2)$. Then $\ed(A) - \dim A = 1$.  Set $M = A/(X^2)$. It is readily checked that $\lambda(M) = 4$ while $\low(M) = 3$.

(iii) $A = k[[X,Y,Z]]/(Y^2, Z^2)$.  Then $\ed(A) - \dim A = 2$.  Set $N = A/(X)$. It is readily checked that $\lambda(N) = 4$ while $\low(N) = 3$.

\end{document}